\newtheorem{theorem}{Theorem}[section]
\newtheorem{proposition}[theorem]{Proposition}
\newtheorem{claim}{Claim}
\title{Finiteness of purely cosmetic fillings}
\author{Kazuhiro Ichihara}
\address{Department of Mathematics, College of Humanities and Sciences, Nihon University, 3-25-40 Sakurajosui, Setagaya-ku, Tokyo 156-8550, Japan}
\email{ichihara.kazuhiro@nihon-u.ac.jp}
\keywords{cosmetic filling, Dehn filling, JSJ decomposition}
\subjclass[2020]{Primary 57K30; Secondary 57K32, 57K35}
\date{\today}
\thanks{This work was supported by JSPS KAKENHI Grant Number JP22K03301.}
\begin{document}

\begin{abstract}
A pair of Dehn fillings on a compact, orientable 3-manifold $M$ with a torus boundary $\partial M$ is said to be \emph{purely cosmetic} if the resulting 3-manifolds are orientation-preservingly homeomorphic. 
In this paper, we show that if $\partial M$ is incompressible, then there are only finitely many pairs of purely cosmetic fillings.
\end{abstract}

\maketitle

\section{Introduction}
Attaching a solid torus to a torus boundary of a 3-manifold is called a \emph{Dehn filling}. 
This fundamental operation gives rise to 3-manifolds from a given 3-manifold. 
It is natural to conjecture that two distinct Dehn fillings on a compact, orientable 3-manifold $M$ with an incompressible torus boundary $\partial M$ are not orientation-preservingly homeomorphic. 
For further details of this conjecture, see \cite{GordonICM} and \cite[Problem 1.81A]{KirbyProblems}.
In view of this, a pair of Dehn fillings on $\partial M$ is said to be \emph{purely cosmetic} if the resulting 3-manifolds are orientation-preservingly homeomorphic.
In this paper, we prove the following result.

\begin{theorem}\label{MainThm}
Let $M$ be a compact, orientable 3-manifold whose boundary $\partial M$ is an incompressible torus. 
Then there are only finitely many pairs of purely cosmetic fillings on $\partial M$.
\end{theorem}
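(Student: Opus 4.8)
The plan is to prove the equivalent assertion that all but finitely many slopes $\gamma$ on $\partial M$ are \emph{solitary}, meaning that $M(\gamma')$ orientation-preservingly homeomorphic to $M(\gamma)$ forces $\gamma'=\gamma$; this gives the theorem, since then every purely cosmetic pair consists of two of the finitely many non-solitary slopes. First I would reduce to $M$ irreducible: in the prime decomposition $\partial M$ lies in a unique prime summand $M_{0}$, which is irreducible with $\partial M_{0}=\partial M$ incompressible, and $M(\gamma)$ is the connected sum of $M_{0}(\gamma)$ with the (closed) remaining summands, so after discarding the finitely many $\gamma$ with $M_{0}(\gamma)$ reducible or $S^{3}$ (the latter for at most one $\gamma$, by Gordon--Luecke) an orientation-preserving homeomorphism $M(\gamma')\cong M(\gamma)$ amounts to one $M_{0}(\gamma')\cong M_{0}(\gamma)$. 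By geometrization $M$ is then hyperbolic, Seifert fibered, or has a nontrivial JSJ decomposition.

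The common engine is the lemma that \emph{every orientation-preserving self-homeomorphism $g$ of $M$ acts as the identity on the set of slopes of $\partial M$}. Granting it, the hyperbolic case runs as follows. By Thurston's hyperbolic Dehn filling theorem there is a finite set $E$ of slopes so that for $\gamma\notin E$ the manifold $M(\gamma)$ is hyperbolic, the core $c_{\gamma}$ of the filling solid torus is isotopic to the unique shortest geodesic of $M(\gamma)$ (of length tending to $0$ as $\gamma\to\infty$), and drilling $c_{\gamma}$ out of $M(\gamma)$ recovers $M$, with the meridian of $c_{\gamma}$ recovering the slope $\gamma$ on $\partial M$. So if $\gamma,\gamma'\notin E$ and $f\colon M(\gamma)\to M(\gamma')$ is an orientation-preserving homeomorphism, Mostow rigidity lets one take $f$ isometric; it sends shortest geodesic to shortest geodesic, hence $c_{\gamma}$ to $c_{\gamma'}$, hence restricts to an orientation-preserving self-homeomorphism $g$ of $M$ carrying the slope $\gamma$ to $\gamma'$, and the lemma forces $\gamma'=\gamma$. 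Thus every $\gamma\notin E$ is solitary unless it is paired with a slope of $E$, and since for each $\gamma_{0}\in E$ at most one slope outside $E$ yields a manifold homeomorphic to $M(\gamma_{0})$ (apply the argument to two such), there are only finitely many non-solitary slopes. The same argument applied to the JSJ piece $M_{1}\supseteq\partial M$ settles the JSJ case: for all but finitely many $\gamma$ the JSJ decomposition of $M(\gamma)$ is obtained from that of $M$ by replacing $M_{1}$ by $M_{1}(\gamma)$, which is then hyperbolic or a generic Seifert piece; an orientation-preserving homeomorphism $M(\gamma')\cong M(\gamma)$ respects JSJ decompositions and, matching the piece containing the very short geodesic (if $M_{1}$ is hyperbolic) or tracking the Seifert data (if $M_{1}$ is Seifert fibered), restricts to an orientation-preserving self-homeomorphism of $M$ taking $\gamma$ to $\gamma'$, whence $\gamma'=\gamma$.

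To prove the lemma: $g_{*}$ preserves the kernel of $H_{1}(\partial M;\mathbb{Q})\to H_{1}(M;\mathbb{Q})$, which by the half-lives-half-dies principle is one-dimensional and so is cut out by a single slope $\lambda_{0}$ (the rational longitude); thus $g_{*}\in\mathrm{GL}_{2}(\mathbb{Z})$ has a rational eigenvector. On the other hand the image of $\mathrm{MCG}(M)\to\mathrm{MCG}(\partial M)$ is finite: via the characteristic submanifold, a self-homeomorphism of $M$ is, up to a finite-order correction coming from the finitely many symmetries of the hyperbolic JSJ pieces and of the base orbifolds of the Seifert pieces, a product of Dehn twists along essential tori and annuli, and these act trivially on $\partial M$ (an essential torus lies in the interior, and the two boundary curves of an essential annulus are parallel on the torus $\partial M$, so the twists they induce there cancel). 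A finite-order element of $\mathrm{GL}_{2}(\mathbb{Z})$ of determinant $+1$ (as $g$ preserves orientation) with a rational eigenvector is $\pm I$, so $g_{*}=\pm I$ on $H_{1}(\partial M;\mathbb{Z})$ and $g$ fixes every slope.

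Finally, the Seifert fibered case. For all but finitely many $\gamma$ the filling $M(\gamma)$ is a Seifert fibered space with a unique Seifert fibration, obtained by extending the fibration over the filling solid torus (whose core becomes a regular or exceptional fiber according to its distance from the fiber slope), and its Seifert invariants depend on $\gamma$ injectively within each family of slopes at fixed distance from the fiber slope and incompatibly across families --- so these $\gamma$ are solitary, either by this bookkeeping or again via the core-drilling argument and the lemma. The delicate point, and the part where I expect the real work to lie, is the control of the finitely many \emph{small} fillings --- lens spaces (which arise for an infinite family of slopes when the base orbifold is a disk with two cone points), connected sums of two lens spaces, and Seifert spaces with non-unique fibration --- for which the core is not canonically characterized; here one uses that the order of $H_{1}(M(\gamma))$ grows linearly in the distance of $\gamma$ from $\lambda_{0}$, so that each such small manifold is realized by only finitely many fillings, and then the classification of lens spaces and of the remaining small Seifert spaces shows that all but finitely many of the relevant slopes are solitary. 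Assembling the three cases proves the theorem; the main obstacle, beyond this Seifert bookkeeping, is pinning down the finiteness of $\mathrm{MCG}(M)\to\mathrm{MCG}(\partial M)$, for which one invokes the theory of the characteristic submanifold of Haken manifolds.
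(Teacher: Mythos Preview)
Your lemma---that every orientation-preserving self-homeomorphism of $M$ fixes each slope on $\partial M$---is correct and is a clean organizing principle; the annulus--twist cancellation holds because the two components of $\partial A$ inherit opposite orientations from $A$, so the induced boundary twists are inverse to one another. Your treatment of the hyperbolic case, and of the toroidal case when the outermost piece $M_1$ is hyperbolic, via ``drill out the shortest geodesic and apply the lemma,'' is a genuine and somewhat more self-contained alternative to the paper's direct citation of \cite{FPS22}.

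The real gap is in the toroidal case when $M_1$ is Seifert fibered. Your claim that ``for all but finitely many $\gamma$ the JSJ decomposition of $M(\gamma)$ is obtained from that of $M$ by replacing $M_1$ by $M_1(\gamma)$'' is false. Write $\gamma$ in a basis $(m,l)$ with $l$ the fiber of $M_1$; for every fixed integer $d\geq 1$ there are infinitely many slopes with $\Delta(\gamma,l)=d$. When $d=1$ and $M_1$ is a cable space (base an annulus with one cone point) the filled piece $M_1(\gamma)$ is a solid torus, so a JSJ torus compresses; when $d=1$ and $M_1=P\times S^1$ the filled piece is $T^2\times I$, so two JSJ tori become parallel. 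In either situation infinitely many slopes produce a strictly smaller JSJ decomposition of $M(\gamma)$, and your argument never returns to handle them. The paper deals with exactly these degenerations by induction on the number of JSJ pieces (its Claims~1 and~2).

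Even for $d\geq 2$, your ``tracking the Seifert data'' does not suffice: as an abstract bounded Seifert manifold, $M_1(\gamma)$ depends only on $d$ and on the residue of the numerator of $\gamma$ modulo $d$, so infinitely many slopes give homeomorphic $M_1(\gamma)$. Hence there is no Seifert invariant that singles out $M_1(\gamma)$ among the JSJ pieces of $M(\gamma)$ for all but finitely many $\gamma$, and nothing forces the homeomorphism $h\colon M(\gamma)\to M(\gamma')$ to carry $M_1(\gamma)$ to $M_1(\gamma')$ or to take core to core. This matching problem is precisely the technical heart of the paper (its Claim~4 and the final case analysis), where one compares the gluing twist along the fiber of $M_1$ against the possible boundary restrictions of self-homeomorphisms of the adjacent block; without that analysis, or an inductive reduction, your JSJ argument does not close.
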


We remark that the same does not hold when $\partial M$ is compressible or has additional connected components; see, for example, \cite[Theorem 2.4.3(c)]{CGLS}. 

The theorem is already known in the case where $M$ is a hyperbolic manifold \cite{FPS22} or a Seifert fibered space \cite{Rong}. 
It is also known for the exterior of a knot in the 3-sphere $S^3$ \cite{Hanselman}.



Similar finiteness results were obtained in \cite{Martelli, RieckYamashita}. 
That is, for an incompressible torus boundary of a compact, orientable 3-manifold, there are only finitely many Dehn fillings that yield a fixed closed 3-manifold.

It is also shown, as a corollary of \cite[Theorem~2.8]{BoyerLines}, that the exterior of a knot in an integral homology sphere admits at most two pairs of integral purely cosmetic fillings. 
A generalization of this result is given in \cite[Theorem~2]{IJ2506} for the exterior of a null-homologous knot in a rational homology sphere obtained by Dehn surgery on a knot in the 3-sphere.



\section{Proof}

Throughout the following, let $M$ be a compact, orientable 3-manifold whose boundary $\partial M$ is an incompressible torus. 
For standard terminology in 3-manifold theory, see \cite{JacoBook}. 

If $M$ is reducible, it can be canonically decomposed into a finite number of connected summands. 
It is shown in \cite[Theorem~4.1]{Martelli} that, for a fixed closed 3-manifold, there are only finitely many Dehn fillings on $M$ that yield it. 
Thus, to prove Theorem~\ref{MainThm}, it suffices to consider the case in which $M$ is irreducible. 
In the following, we assume that $M$ is irreducible.



Suppose that $s_1$ and $s_2$ are purely cosmetic filling slopes on $\partial M$. 
That is, the slopes $s_1$ and $s_2$ are represented by the curves on $\partial M$ that are identified with the meridians of the attached solid tori via purely cosmetic Dehn fillings. 
We refer to these fillings as the $s_1$-filling and $s_2$-filling, and denote the resulting closed 3-manifolds by $M(s_1)$ and $M(s_2)$, which are orientation-preservingly homeomorphic.

When the interior of $M$ is a one-cusped hyperbolic 3-manifold, it is proved in \cite[Theorem~1.13]{FPS22} that $s_1 = s_2$, except for a finite set of explicitly given pairs of slopes for $M$. 
When $M$ is a Seifert fibered space, it follows from \cite[Proof of Theorem~1]{Rong} that $s_1 = s_2$. 
See the references for more details.

Thus, the following is our key proposition. 
From this, Theorem~\ref{MainThm} readily follows, together with Thurston's Hyperbolization theorem for Haken manifolds; see \cite{Kapovich} and \cite{Otal} for its proof and background.

\begin{proposition}\label{PropTor}
Let $M$ be a compact, orientable, irreducible 3-manifold containing essential (incompressible and not boundary-parallel) tori. 
Suppose that the boundary $\partial M$ of $M$ is an incompressible torus. 
Then there are only finitely many pairs of cosmetic fillings on $\partial M$.     
\end{proposition}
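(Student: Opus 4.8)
The plan is to prove, by induction on the number of tori in the JSJ decomposition, the (formally slightly stronger) assertion that every compact, orientable, irreducible 3-manifold $M$ with incompressible torus boundary has only finitely many purely cosmetic pairs of fillings on $\partial M$, where \emph{purely cosmetic} means the two fillings are orientation-preservingly homeomorphic. If the JSJ decomposition is trivial, then $M$ is Haken and is either Seifert fibered or, by Thurston's hyperbolization theorem (see \cite{Kapovich} and \cite{Otal}), hyperbolic, and the assertion is exactly \cite{Rong} or \cite{FPS22}; this is the base case. So assume the JSJ decomposition of $M$ is nontrivial and let $X$ be the piece containing $\partial M$. Then $X$ is compact, orientable, irreducible, with incompressible boundary made up of $\partial M$ together with at least one JSJ torus of $M$, and $X$ is Seifert fibered or hyperbolic; moreover every filling $M(s)$ is obtained from the JSJ pieces of $M$ by replacing $X$ with $X(s)$, leaving the other pieces and the gluings unchanged.

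The first step is to understand the JSJ decomposition of $M(s)$. I would call $s$ \emph{good} when $X(s)$ is again hyperbolic (if $X$ is hyperbolic this holds for all but finitely many $s$, by Thurston's hyperbolic Dehn surgery theorem) or is a Seifert fibered space whose base orbifold is hyperbolic (if $X$ is Seifert fibered), and when, after amalgamating $X(s)$ with any adjacent Seifert pieces whose fibrations match it -- along a pattern that does not depend on $s$ -- the manifold $X(s)$ constitutes one piece of the JSJ decomposition of $M(s)$, the remaining pieces being exactly those of $M$. For all but finitely many good slopes this piece is moreover \emph{distinguished} among the pieces of $M(s)$: the core of the filling solid torus lies in it as an exceptionally short geodesic (hyperbolic case) or as an exceptional fibre of exceptionally large order (Seifert case), which no fixed piece of $M$ can exhibit.

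Now let $(s_1,s_2)$ be a purely cosmetic pair of good slopes lying outside this finite exceptional set, and let $\phi\colon M(s_1)\to M(s_2)$ be an orientation-preserving homeomorphism. Since $\phi$ carries the JSJ decomposition of $M(s_1)$ onto that of $M(s_2)$ and must take the distinguished piece to the distinguished piece, it restricts to an orientation-preserving homeomorphism $X(s_1)\to X(s_2)$. By Mostow--Prasad rigidity (hyperbolic case) or by uniqueness of the Seifert fibration on a large Seifert manifold (Seifert case), together with Waldhausen's theorem that homotopic homeomorphisms of Haken manifolds are isotopic, this restriction is isotopic to a homeomorphism sending the core of the $s_1$-filling to the core of the $s_2$-filling; drilling out these cores yields an orientation-preserving self-homeomorphism $\Psi$ of $M$ with $\Psi(\partial M)=\partial M$ and $\Psi_*[s_1]=\pm[s_2]$ in $H_1(\partial M;\mathbb Z)$. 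The crucial point is that $\Psi_*$ fixes the rational longitude of $M$ -- the generator of the kernel of $H_1(\partial M;\mathbb Q)\to H_1(M;\mathbb Q)$, which is a single line precisely because $\partial M$ is connected -- while acting with finite order on $H_1(\partial M;\mathbb Z)$, because on $X$ the map $\Psi$ is, up to isotopy and up to Dehn twists along tori and annuli (which all act trivially on $H_1(\partial M)$), a finite-order symmetry, and $\partial M\subset X$. A finite-order element of $SL_2(\mathbb Z)$ preserving a rational line is $\pm I$; hence $s_1=s_2$, and two good slopes outside the finite set never form a purely cosmetic pair.

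It remains to treat the non-good slopes. If $X$ is hyperbolic there are only finitely many, so for each such $s_0$ the manifold $M(s_0)$ is fixed and, by \cite{Martelli} and \cite{RieckYamashita}, only finitely many fillings of $M$ yield it. If $X$ is Seifert fibered, the non-good slopes comprise finitely many slopes along which $X(s)$ is reducible -- handled the same way -- together with at most finitely many infinite families along which $X(s)$ collapses to one of a short list of small Seifert pieces (a solid torus, $T^2\times I$, or the twisted $I$-bundle over the Klein bottle); along such a family some JSJ torus of $M$ becomes compressible, or parallel to another one, in $M(s)$, so that $M(s)$ is, up to homeomorphism, a Dehn filling or a twisted regluing along a fixed torus of a 3-manifold with strictly fewer JSJ tori, and one concludes by the inductive hypothesis together with the analysis of twisting along a torus -- using, for instance, that a Dehn twist is not conjugate to its inverse in $SL_2(\mathbb Z)$, which is what prevents a regluing family from being cosmetic infinitely often. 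Finally a good slope and a non-good slope form a purely cosmetic pair only finitely often, since the numbers and homeomorphism types of the JSJ pieces of the two fillings then differ once the good slope is large. I expect the real obstacle to be exactly this control of the JSJ decomposition under filling $X$ -- especially the degenerate fillings -- and the verification that the homeomorphism realizing a purely cosmetic pair respects the filled piece, so that it really does drill to a self-homeomorphism of $M$.
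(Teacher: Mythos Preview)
Your inductive framework matches the paper's: both induct on the number of pieces in the (geometric/JSJ) decomposition, settle the base case via \cite{FPS22} and \cite{Rong}, dispose of degenerate fillings (where a JSJ torus compresses or the outer piece collapses to $T^2\times I$ or a twisted $I$-bundle over the Klein bottle) by invoking the inductive hypothesis, and then try to pin down where the cosmetic homeomorphism sends the filled piece. The divergence, and the gaps, are in the endgame for the Seifert case.

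First, your ``distinguished piece'' criterion does not work when $X=M_0$ is Seifert fibered. Writing slopes as $q/p$ relative to a (section, fiber) basis, the core of the filling is an exceptional fiber of order $p$; but there are infinitely many good slopes with any fixed $p$ (for instance all $q/1$), and for these $X(s)$ has no exceptional fiber of large order. Worse, $X(q/p)$ and $X(q'/p)$ with $q\equiv q'\pmod p$ are homeomorphic as abstract Seifert manifolds with boundary, so nothing intrinsic to the piece distinguishes it from a fixed $M_k$. The paper handles exactly this difficulty by introducing a gluing invariant $\Delta_T$ on each decomposing torus $T$ (the maximal distance between preferred slopes from the two adjacent blocks); since $|q_i|\to\infty$, the torus realizing the maximal $\Delta_T$ must lie on $\partial M_0(s_i)$, and this forces $h_i(M_0(s_i))$ to be either $M_0(s'_i)$ or a block $M_l$ adjacent to it. Second, your justification that $\Psi_*$ has finite order on $H_1(\partial M)$ is incorrect: Dehn twists along vertical annuli in $X$ with one boundary circle on $\partial M$ act on $H_1(\partial M)$ as shears along the fiber direction, not trivially, so ``finite order modulo such twists'' does not yield finite order. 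Whether such an annulus twist extends across the adjacent JSJ torus is precisely the issue, and the paper resolves it directly: once $h_i(M_0(s_i))$ is located, the restriction of $h_i$ to a torus $T\subset\partial M_0$ is seen to be a power of the Dehn twist along the fiber of $M_0$, and this cannot extend to the neighboring block---if that block is hyperbolic, the boundary restriction of any self-homeomorphism is a Euclidean isometry of a horotorus (Mostow--Prasad), and if it is Seifert, its fibration does not match that of $M_0$ on $T$. This fiber-mismatch argument is the mechanism your drilling-plus-rational-longitude approach is missing.
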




For 3-manifolds containing essential tori, the following is well-known as the torus decomposition theorem, a particular case of the JSJ decomposition theorem: 
For a compact, irreducible, orientable 3-manifold $M$, there exists a collection of finitely many disjoint incompressible tori such that each component obtained by cutting $M$ along these tori is either atoroidal or a Seifert manifold, and a minimal such collection is unique up to isotopy. 
See \cite{JacoShalen} and \cite{Johannson} for the original articles, and \cite{NS} or \cite{JacoBook} for further explanations.


Following the arguments in \cite{Martelli}, instead of the torus decomposition, we consider the \emph{geometric decomposition} of $M$. 
This is obtained by replacing each torus in the collection of tori from the torus decomposition that bounds a twisted interval bundle over a Klein bottle $K$ with the core $K$. 
Then, we have the following from \cite[Proposition~3.3]{Martelli}. 
The pieces obtained by cutting $M$ along the finite collection $\mathcal{T}$ of tori and Klein bottles in the geometric decomposition satisfy the following: 
\begin{enumerate}
\item every Seifert block fibers over an orbifold $\Sigma$ with (orbifold) Euler characteristic $\chi(\Sigma) < 0$;
\item the fibrations of the blocks adjacent to a torus or a Klein bottle $S \in \mathcal{T}$ do not induce the same fibration on $S$.
\end{enumerate}
We remark that every Seifert block of the decomposition has a unique Seifert fibration, since its base orbifold has negative Euler characteristic.

\begin{proof}[Proof of Proposition~\ref{PropTor}]
Let $M_0, M_1, \dots, M_n$ be the blocks of the geometric decomposition of $M$, i.e., $M = M_0 \cup M_1 \cup \cdots \cup M_n$, where $\partial M \subset \partial M_0$. 
Let $\mathcal{T}$ be the collection of tori and Klein bottles defining the decomposition. 

We will prove the proposition by induction on $n$. 
If $n = 0$, that is, there are no decomposing tori, then the proposition holds by \cite{FPS22} and \cite{Rong}, as observed above. 
Thus, we consider the case $n = t$ under the assumption that the proposition holds for $n \le t-1$.




Assume that there exist infinite sequences $\{ s_i \} = (s_1, s_2, \dots)$ and $\{ s'_j \} = (s'_1, s'_2, \dots)$ of slopes on $\partial M$ such that $s_i \ne s'_i$ and $M(s_i) \cong M(s'_i)$ for every $i$, and derive a contradiction. 
Here, $\cong$ indicates orientation-preservingly homeomorphic. 
After taking subsequences, and exchanging $\{ s_i \}$ and $\{ s'_j \}$ if necessary, we may assume that all elements of $\{ s_i \}$ are mutually distinct. 
Then, by \cite[Theorem~4.1]{Martelli}, $\{ s'_j \}$ also contains infinitely many elements. 
Thus, after taking subsequences, we may assume that $s_i \ne s_{i'}$ if $i \ne i'$ and $s'_j \ne s'_{j'}$ if $j \ne j'$ for the sequences $\{ s_i \}$ and $\{ s'_j \}$. 

We first show the following claim.

\begin{claim}
Any $T \in \mathcal{T}$ becomes compressible in $M(s_i)$ after only finitely many $s_i$-fillings.
\end{claim}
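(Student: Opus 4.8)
The plan is to argue by contradiction: suppose some $T\in\mathcal{T}$ remains incompressible in $M(s_i)$ for infinitely many $i$, and after passing to a subsequence assume this holds for all $i$. The key idea is that an incompressible torus in a closed irreducible $3$-manifold is either essential or bounds a twisted $I$-bundle over a Klein bottle; in either case it either survives in the geometric decomposition of $M(s_i)$ or can be pushed into a Seifert piece. I would first separate $M$ along $T$ (or along the corresponding torus, if $T$ is a Klein bottle, taking the boundary of its regular neighborhood) into two submanifolds, say $X\ni\partial M$ and $Y$; note that $Y$ (and the other pieces on the $Y$ side) are filled trivially, so they appear unchanged in $M(s_i)=X(s_i)\cup Y$. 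Since $T$ is incompressible in $M(s_i)$ by assumption, I can regard $X(s_i)$ as a genuine $3$-manifold containing the torus $T$ in its boundary, with $T$ incompressible, and $X(s_i)$ has strictly fewer blocks in its geometric decomposition than $M$ (it contains one or more fewer blocks than $M_0\cup\cdots\cup M_n$), so the inductive hypothesis of Proposition~\ref{PropTor} applies to $X$.

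Next I would transfer the cosmetic relation from $M$ to $X$. Given $M(s_i)\cong M(s'_i)$ orientation-preservingly, I would examine how this homeomorphism interacts with the torus decomposition: by uniqueness of the JSJ/geometric decomposition up to isotopy, the image of $T\subset M(s_i)$ under the homeomorphism is a torus isotopic to $T\subset M(s'_i)$ (both arise as decomposing tori of the same closed manifold, and one checks $s'_i$-filling also leaves $T$ incompressible, after a further subsequence, using the same contradiction hypothesis symmetrically). Restricting the homeomorphism to the $X$-side then yields $X(s_i)\cong X(s'_i)$ orientation-preservingly, up to possibly re-gluing along $T$ by an automorphism of the torus; absorbing this automorphism into the parametrization of slopes on $\partial M$, this gives infinitely many pairs of cosmetic fillings on $\partial M$ for $X$, contradicting the inductive hypothesis since $X$ has fewer decomposition blocks.

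The main obstacle I expect is the bookkeeping in the transfer step: ensuring that the homeomorphism $M(s_i)\cong M(s'_i)$ really restricts to a homeomorphism of the $X$-pieces compatible with $\partial M$, rather than permuting blocks on the two sides in an incompatible way, and handling the Klein-bottle case where ``incompressible'' must be interpreted via the bounding torus. One must also deal with the possibility that after filling, the piece $X(s_i)$ is itself reducible or has compressible $\partial M$ — but this is controlled, since $s_1,s_2,\dots$ are distinct and \cite[Theorem~4.1]{Martelli} together with the finiteness of slopes producing such degeneracies lets us discard finitely many bad $i$. A secondary subtlety is that the ``fibrations do not match'' condition (2) in the geometric decomposition must be used to rule out the scenario where $T$ becomes non-essential only by being swallowed into a larger Seifert piece in $M(s_i)$; in that case the two adjacent blocks would have to fibre compatibly across $T$ in $M(s_i)$, which constrains the filling slope $s_i$ to one of finitely many values (those making the Seifert fibration of $M_0(s_i)$ extend across $T$), again contradicting infinitude after passing to a subsequence. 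Once these points are handled, the claim follows.
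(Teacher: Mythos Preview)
You have misread the claim. The statement asserts that for each $T\in\mathcal{T}$, the set of indices $i$ for which $T$ is \emph{compressible} in $M(s_i)$ is finite; equivalently, $T$ remains incompressible after all but finitely many fillings. Your contradiction hypothesis --- ``suppose some $T$ remains incompressible in $M(s_i)$ for infinitely many $i$'' --- is therefore not the negation of the claim but rather (a consequence of) the claim itself. The entire argument you sketch is directed at the wrong target: if it succeeded, it would show that $T$ is compressible for cofinitely many $i$, which is the opposite of what is asserted and would in fact destroy the subsequent step in the paper (``after passing to a subsequence, all elements of $\mathcal{T}$ remain incompressible'').

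The paper's actual proof goes in the correct direction and is short. Assuming $T$ becomes compressible for infinitely many $i$, one invokes \cite[Theorem~2.4.4]{CGLS}: an incompressible torus that compresses under infinitely many fillings on $\partial M$ forces $\partial M$ and $T$ to cobound a cable space, which must therefore be the block $M_0$. For each such $i$ the filled cable space $M_0(s_i)$ is a solid torus, so $M(s_i)$ is obtained by a Dehn filling on $M':=M_1\cup\cdots\cup M_t$ along the torus $T$; moreover the induced filling slope on $T$ varies with $i$. A block-count comparison shows the same happens on the $s'_i$ side, so $M'$ acquires infinitely many pairs of purely cosmetic fillings, contradicting the inductive hypothesis (since $M'$ has fewer blocks). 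The key input you are missing is the CGLS cable-space theorem, which is what converts ``$T$ compresses infinitely often'' into structural information about $M_0$.
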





\begin{proof}
Suppose that some $T \in \mathcal{T}$ becomes compressible after $s_i$-fillings for infinitely many $i$. 
Then, by \cite[Theorem~2.4.4]{CGLS}, $\partial M$ and $T$ cobound a cable space, which must be $M_0$. 
(A \textit{cable space} is the exterior of a $(p,q)$-cable of the core of $V$, where $p$ and $q$ are coprime integers with $q \ge 2$.) 
Furthermore, by comparing the number of blocks in the geometric decomposition, $T$ also becomes compressible after infinitely many $s'_j$-fillings. 
It follows that $M_0(s_i)$ and $M_0(s'_i)$ are all homeomorphic to a solid torus for such $i$. 
Then, since the gluing maps of $M_0(s_i)$ and $M_0(s'_i)$ to the manifold $M' := M_1 \cup \cdots \cup M_t$ vary with $i$, this implies that the manifold $M'$ admits infinitely many purely cosmetic fillings. 
By the induction hypothesis, this gives a contradiction.
\end{proof}


From this claim, after passing to a subsequence if necessary, all elements in $\mathcal{T}$ remain incompressible in all $M(s_i)$ (and $M(s'_j)$). 

\begin{claim}\label{CL2}
For only finitely many $i$, $M_0(s_i)$ becomes an interval bundle over a torus (actually, the product $T^2 \times I$) or over a Klein bottle $K$. 
\end{claim}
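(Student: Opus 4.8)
The plan is to argue by contradiction, assuming that for infinitely many $i$ the block $M_0(s_i)$ is homeomorphic to $T^2 \times I$ or to a twisted $I$-bundle over the Klein bottle $K$. The key observation is that the filling happens on $\partial M \subset \partial M_0$, so $M_0(s_i)$ is a Dehn filling of the single block $M_0$; since $M_0$ is one of the pieces of the geometric decomposition and $\partial M_0$ contains at least one torus from $\mathcal{T}$, the outcome $M_0(s_i)$ being an interval bundle over $T^2$ or $K$ is a very restrictive condition. First I would note that $T^2 \times I$ and the twisted $I$-bundle over $K$ are among the ``small'' Seifert fibered or exceptional fillings, and invoke the standard exceptional-surgery machinery: if a Dehn filling on an incompressible-boundary piece produces an $I$-bundle over $T^2$ or $K$, then either $M_0$ itself already carries a compatible Seifert fibration (and then there are only boundedly many such slopes, in fact at most those matching a prescribed fibration on $\partial M$), or the filling slope $s_i$ has bounded distance from the boundary slopes of essential surfaces in $M_0$, so again only finitely many slopes qualify by \cite[Theorem~2.4.4]{CGLS} or the finiteness of ``boundary slopes.''

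Carrying this out, the first step is to reduce to the case $M_0$ atoroidal (hyperbolic): if $M_0$ is Seifert fibered, then by the previous claim all tori of $\mathcal{T}$ stay incompressible, so $M_0(s_i)$ is a Seifert filling; using property (2) of the geometric decomposition (adjacent blocks do not induce the same fibration on the separating surface), the cases where $M_0(s_i)$ is $T^2 \times I$ or the $I$-bundle over $K$ force the $s_i$-filling to respect the Seifert fibration of $M_0$, and there is at most one such slope (the fiber slope), contradicting infinitude. If $M_0$ is hyperbolic, then $T^2 \times I$ and the $I$-bundle over $K$ are non-hyperbolic, so $s_i$ is an exceptional slope for $M_0$; by Thurston's hyperbolic Dehn surgery theorem there are only finitely many exceptional slopes, which is the desired contradiction.

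The second step is to handle the possibility that $M_0(s_i)$, while an $I$-bundle over $T^2$ or $K$, could fail to be a genuine Dehn filling of $M_0$ alone — but this cannot happen, since by hypothesis the filling is performed on $\partial M$, a component of $\partial M_0$, and the other boundary components of $M_0$ lie in $\mathcal{T}$ and are untouched; so $M_0(s_i)$ really is $M_0$ with one boundary torus filled, and the preceding dichotomy applies verbatim. One should also record that the homeomorphism type of $M_0(s_i)$ is well-defined independently of how the ambient $M(s_i)$ is assembled, because $\mathcal{T}$ remains incompressible and the JSJ/geometric pieces are canonical. Combining the two steps: in every case only finitely many $i$ can give $M_0(s_i)$ an interval-bundle structure over $T^2$ or $K$, which is exactly the claim.

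The main obstacle I anticipate is the Seifert-fibered subcase: one must be careful that $M_0(s_i)$ being $T^2 \times I$ or the twisted $I$-bundle over $K$ genuinely forces the filling slope to be the Seifert fiber slope of $M_0$ (or very close to it), rather than merely constraining the base orbifold. This requires knowing the classification of Seifert fibered $I$-bundles over $T^2$ and $K$ and matching their fibrations to a putative fibration of $M_0$ extending across the filling solid torus; the subtlety is that $T^2 \times I$ admits infinitely many Seifert fibrations, so one must use property (2) of the geometric decomposition together with the incompressibility of $\mathcal{T}$ in $M(s_i)$ to pin down the fibration and conclude finiteness. The hyperbolic subcase, by contrast, is immediate from hyperbolic Dehn surgery and should present no difficulty.
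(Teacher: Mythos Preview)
Your hyperbolic subcase is fine, but the Seifert subcase contains a genuine error that breaks the argument. You assert that if $M_0$ is Seifert fibered and $M_0(s_i)\cong T^2\times I$, then the filling slope must be the fiber slope (or one of finitely many slopes). This is false. The relevant situation, as identified in \cite[Proof of Theorem~3.5]{Martelli}, is $M_0 = P\times S^1$ with $P$ the pair of pants. Fix the basis $(m,l)$ on $\partial M$ with $m$ the section and $l$ the fiber. Then \emph{every} integral slope $s_i = m + q_i l$ (i.e.\ $p_i=1$) yields $M_0(s_i)\cong T^2\times I$: the Seifert fibration extends with a regular fiber over the filled-in disk, and the base becomes an annulus. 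So there are infinitely many such slopes, and no appeal to property~(2) of the geometric decomposition on $M$ can change this, because the statement you are trying to prove (``only finitely many $s_i$ make $M_0(s_i)$ an $I$-bundle'') is a statement about $M_0$ alone and is simply not true for $M_0=P\times S^1$.

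What the paper actually proves is weaker but sufficient: it does \emph{not} show that only finitely many $s_i$ produce $T^2\times I$, but rather that only finitely many \emph{cosmetic pairs} $(s_i,s'_i)$ can have this property. The cosmetic hypothesis $M(s_i)\cong M(s'_i)$ is essential and is never invoked in your proposal for this claim. Concretely, once $M_0(s_i)\cong T^2\times I$, the manifold $M(s_i)$ is obtained by deleting this product and gluing the adjacent block(s) directly with a $q_i$-fold Dehn twist along the fiber of $M_0$; for large $i$ this torus is a genuine JSJ torus of $M(s_i)$, and the same holds on the $s'_i$ side. An orientation-preserving homeomorphism $h\colon M(s_i)\to M(s'_i)$ then restricts to a self-homeomorphism of the adjacent block $B$ whose boundary restriction is a nontrivial Dehn twist along the $M_0$-fiber. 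This is impossible: if $B$ is hyperbolic, Mostow--Prasad rigidity forces $h|_{\partial B}$ to be an isometry of the cusp torus, never a nontrivial Dehn twist; if $B$ is Seifert, $h|_{\partial B}$ must preserve the $B$-fibration, which by property~(2) is transverse to the $M_0$-fiber. The torus-bundle case (when both boundary tori of $M_0(s_i)$ are identified) is handled separately by comparing monodromies. Your plan misses this entire mechanism; you would need to bring in the homeomorphism $h$ and analyze its restriction to the adjacent block to close the gap.
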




\begin{proof}
Suppose that $M_0(s_i)$ yields interval bundles over a torus (actually, products $T^2 \times I$) or over a Klein bottle $K$ for infinitely many $i$. 
We consider the former case, where some $T, T' \in \mathcal{T}$ become parallel after $s_i$-fillings for infinitely many $i$. 
The latter case can be treated similarly.

As claimed in \cite[Proof of Theorem~3.5]{Martelli}, the product $T^2 \times I$ arises as $M_0(s_i)$ only when $M_0 = P \times S^1$, with $P$ the pair of pants. 
Set a homology basis $(m,l)$ on $\partial M$ by taking $m$ as the boundary of a fixed section of $M_0$ and $l$ as the fiber of $M_0$. 
Then, also as in \cite[Proof of Theorem~3.5]{Martelli}, the slopes $s_i$ are represented by some $q_i \in \mathbb{Z}$ with respect to this basis in the standard way. 
It follows that $M(s_i)$ is realized by removing the product $M_0(s_i)$ and gluing the adjacent block(s) directly via a map that twists $q_i$ times along the fiber of $M_0 = P \times S^1$. 
Thus, for sufficiently large $i$, the product bundle $M_0(s_i)$ gives a decomposing torus of the geometric decomposition of $M(s_i)$.
Under the assumption that $M(s_i) \cong M(s'_i)$, by considering the geometric decomposition, the same situation occurs for $M_0(s'_i)$ and $M(s'_i)$ for sufficiently large $i$. 

Let $T$ be the decomposing torus of $M(s_i)$ arising from the product $M_0(s_i)$. 
Then, for sufficiently large $i$, $M(s'_i)$ is obtained by cutting along $T$ in $M(s_i)$ and gluing the adjacent block(s) of the geometric decomposition of $M(s_i)$ directly via Dehn twists along the fiber of $M_0 = P \times S^1$.

If the two boundary tori of $M_0(s_i)$ are glued to obtain both $M(s_i)$ and $M(s'_i)$, then they are torus bundles over the circle. 
Their monodromies differ only by Dehn twists along a simple closed curve. 
Such torus bundles become homeomorphic for only finitely many $i$.

Suppose otherwise. 
Consider an orientation-preserving homeomorphism $h: M(s_i) \to M(s'_i)$. 
From the above arguments, $h$ induces an orientation-preserving self-homeomorphism $h_B := h|_B$ of the block $B$ adjacent to $M_0$ at $T$, such that $h_T := (h|_B)|_T$ is a Dehn twist along the fiber of $M_0 = P \times S^1$.

However, this leads to a contradiction as follows. 
When $B$ is a hyperbolic block, by the Mostow–Prasad rigidity, $h_B$ gives an isometry of the hyperbolic manifold $B - \partial B$. 
Then its restriction to a horotorus corresponding to $T$ is also a Euclidean isometry, which cannot be a Dehn twist along any simple closed curve on $T$. 
When $B$ is Seifert fibered, the restriction $h_T$ of an orientation-preserving homeomorphism $h_B$ of $B$ must preserve the Seifert fibration of $B$, which cannot be a Dehn twist along the fiber of $M_0 = P \times S^1$, since the fibers of $M_0$ and $B$ do not match.
\end{proof}

From these claims, together with the fact that only finitely many $s_i$-fillings can produce new incompressible tori, after passing to a subsequence, we may assume that $\mathcal{T}$ gives the geometric decomposition of both $M(s_i)$ and $M(s'_i)$ for all $i$. 

Then, by the assumption, $M(s_i) = M_0(s_i) \cup M_1 \cup \cdots \cup M_t$ is orientation-preservingly homeomorphic to $M(s'_i) = M_0(s'_i) \cup M_1 \cup \cdots \cup M_t$ for all $i$. 
Since the geometric decomposition is unique up to isotopy, this implies that either $M_0(s_i) \cong M_k$ and $M_0(s'_i) \cong M_{k'}$ for some $k,k' \in \{1, \dots, t\}$, or $M_0(s_i) \cong M_0(s'_i)$ for each $i$.




We now obtain the following.

\begin{claim}
$M_0$ is not hyperbolic.  
\end{claim}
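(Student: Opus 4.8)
The strategy is to rule out the hyperbolic case for $M_0$ by exploiting the structural constraints accumulated through the previous claims, namely that $\mathcal{T}$ is the geometric decomposition of both $M(s_i)$ and $M(s'_i)$, that $\partial M \subset \partial M_0$, and that for each $i$ either $M_0(s_i) \cong M_k$ and $M_0(s'_i) \cong M_{k'}$ for fixed $k, k' \in \{1, \dots, t\}$, or $M_0(s_i) \cong M_0(s'_i)$. Assuming $M_0$ is hyperbolic, the idea is that the interior of $M_0$ is a finite-volume hyperbolic $3$-manifold with at least two cusps (one from $\partial M$, and at least one more since $n = t \geq 1$ forces a gluing torus in $\partial M_0$). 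Thurston's hyperbolic Dehn surgery theorem then says that for all but finitely many slopes $s_i$ on the cusp corresponding to $\partial M$, the filled manifold $M_0(s_i)$ is again hyperbolic, with volume strictly less than $\mathrm{vol}(M_0)$ and strictly increasing toward $\mathrm{vol}(M_0)$ as the slopes $s_i$ leave every finite set.

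First I would discard the subsequence of $i$ for which $M_0(s_i)$ is not hyperbolic: by the surgery theorem these are finitely many, so after passing to a subsequence every $M_0(s_i)$ is hyperbolic, and likewise every $M_0(s'_i)$. Next I would consider the two alternatives. In the case $M_0(s_i) \cong M_k$ for a fixed block $M_k$: this is impossible for infinitely many $i$ because the volumes $\mathrm{vol}(M_0(s_i))$ are mutually distinct for distinct slopes (the surgery theorem gives that the volume strictly increases along any sequence of distinct slopes converging to infinity), whereas $\mathrm{vol}(M_k)$ is a single fixed number; so this alternative survives for only finitely many $i$ and can be subsequenced away. In the remaining case $M_0(s_i) \cong M_0(s'_i)$ for all $i$, I would invoke the hyperbolic structure: an orientation-preserving homeomorphism $M_0(s_i) \to M_0(s'_i)$ is, by Mostow–Prasad rigidity, isotopic to an isometry, and it must carry the core geodesic of the $s_i$-filling solid torus to the core geodesic of the $s'_i$-filling solid torus (these are, for large $i$, the unique shortest geodesics, or at worst lie in a controlled finite list). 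Drilling out these cores recovers an isometry $M_0 \to M_0$ taking the cusp of $\partial M$ to itself and the slope $s_i$ to the slope $s'_i$. But the isometry group of the finite-volume hyperbolic manifold $\mathrm{int}(M_0)$ is finite, so it can send only finitely many slopes to distinct slopes; since $s_i \neq s'_i$ and both sequences consist of mutually distinct slopes, this yields a contradiction for all but finitely many $i$.

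The main obstacle I expect is the careful handling of the identification between cores of filling tori under the homeomorphism, i.e. ensuring that an abstract orientation-preserving homeomorphism $M_0(s_i) \to M_0(s'_i)$ really does respect the filling data well enough to descend to an isometry of $M_0$ sending $s_i$ to $s'_i$. For generic large $i$ this follows from the geodesic length estimates of the hyperbolic Dehn surgery theorem (the core curves are the systole and are much shorter than any competing closed geodesic), but one must either appeal to these estimates or, more robustly, observe that only finitely many filled manifolds $M_0(s_i)$ can fail to have the core as the unique shortest geodesic, and pass to a further subsequence. A secondary technical point is making sure that the cusp of $\mathrm{int}(M_0)$ coming from $\partial M$ is distinguished from the cusps coming from $\mathcal{T}$ — but this is automatic, since only the $\partial M$ cusp is being filled, and the geodesics arising as filling cores are interior geodesics, not parabolic, so they cannot be confused with the remaining cusps. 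Once these points are in place, the finiteness of $\mathrm{Isom}(\mathrm{int}(M_0))$ closes the argument.
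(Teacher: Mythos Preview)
Your overall strategy matches the paper's: dispose of the case $M_0(s_i) \cong M_k$ by a volume argument, then handle $M_0(s_i) \cong M_0(s'_i)$ via hyperbolic rigidity. For the first case your phrasing is slightly loose---volumes of distinct fillings need not be mutually distinct, nor monotone along an arbitrary sequence of slopes---but the conclusion is right because $\mathrm{vol}(M_0(s_i)) \to \mathrm{vol}(M_0)$ from strictly below, so no fixed value is hit infinitely often; this is exactly how the paper argues.

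The genuine gap is in your last step. After drilling you correctly produce an isometry $\phi$ of $M_0$ fixing the cusp $\partial M$ setwise with $\phi(s_i) = s'_i$, and by pigeonhole $\phi$ may be taken independent of $i$. But the assertion that finiteness of $\mathrm{Isom}(M_0)$ forces ``only finitely many slopes sent to distinct slopes'' is false as stated: a single isometry acting on the cusp torus as a rotation of order $3$, $4$, or $6$ moves \emph{every} slope, giving infinitely many pairs $(s,\phi(s))$ with $s\neq\phi(s)$. In the one-cusped situation this is excluded because any orientation-preserving self-homeomorphism must preserve the rational longitude, and a finite-order element of $SL_2(\mathbb{Z})$ fixing a slope is $\pm I$; but $M_0$ here has at least two cusps, and that homological argument is not available on $\partial M$ alone. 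The paper does not attempt your direct drilling argument at all; it instead invokes \cite[Theorem~1.13]{FPS22} as a black box, noting that the multi-cusp version follows by substituting \cite[Theorem~7.29]{FPS22} for \cite[Theorem~7.30]{FPS22} in their proof. To close your argument you would need either that citation or a further analysis exploiting the fact that $\phi$ must also be compatible, on the remaining boundary tori of $M_0$, with the gluings to the adjacent blocks $M_1,\dots,M_t$.
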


\begin{proof}
Suppose that $M_0$ is hyperbolic. 
Suppose that $M_0(s_i) \cong M_k$ for some $k \in \{1, \dots, t\}$ for infinitely many $i$. 
By the famous Hyperbolic Dehn Surgery Theorem, there are only finitely many exceptional fillings, i.e., fillings yielding non-hyperbolic manifolds, on $\partial M \subset \partial M_0$. 
It follows that $M_0(s_i) \cong M_k$ for infinitely many $i$ implies that $M_k$ is hyperbolic. 
However, since $vol(M_0(s_i)) \to vol(M_0)$ and $vol(M_k) < vol(M_0)$ (see \cite{Martelli}, for example), $M_0(s_i) \cong M_k$ can hold only for finitely many $i$, contradicting the assumption. 
Thus, for infinitely many $i$, $M_0(s_i) \cong M_0(s'_i)$ must hold. 
However, this is impossible by the argument given in \cite[Theorem~1.13]{FPS22}. 
Actually, the theorem considers only the case where the manifold is one-cusped, but by using \cite[Theorem~7.29]{FPS22} instead of \cite[Theorem~7.30]{FPS22}, we see that the same statement holds for the multiple cusp case. 
\end{proof}




We thus consider the case where $M_0$ is Seifert fibered. 
This implies that $M_0(s_i)$ and $M_0(s'_i)$ are also Seifert fibered for all $i$, after passing to a subsequence. 
Note that, at this stage, we do not yet know whether $M_0(s_i) \cong M_0(s'_i)$. 

To proceed further, we prepare some settings. 
Note that an $S^1$-bundle over a surface is obtained from $M_0$ by removing singular fibers. 
In other words, $M_0$ is obtained from the $S^1$-bundle by Dehn fillings along some slopes on the torus boundaries of the bundle. 
We fix a section $F_0$ of the $S^1$-bundle, and set a homology basis $(m_0, l_0)$ on a torus $\partial M \subset \partial M_0$ by taking $m_0$ as the homology class of $\partial F_0 \cap \partial M$ and $l_0$ as that of the fiber of the $S^1$-bundle. 
Then, in the usual way, the slopes $s_i$ and $s'_j$ are parametrized by the set $\mathbb{Q} \cup \{\infty (= 1/0)\}$, where $\infty$ corresponds to the fiber of the $S^1$-bundle. 
Set $s_i = q_i / p_i$ and $s'_j = q'_j / p'_j$ with $p_i, p'_j \ge 0$ for each $i$ and $j$.



Recall that we assume there is an orientation-preserving homeomorphism 
\[
h_i : M(s_i) = M_0(s_i) \cup M_1 \cup \cdots \cup M_t \to M(s'_i) = M_0(s'_i) \cup M_1 \cup \cdots \cup M_t
\]
for each $i$. 



\begin{claim}\label{CL4}
For sufficiently large $i$, $h_i(M_0(s_i)) = M_0(s'_i)$, or $h_i(M_0(s_i)) = M_l$ and $h_i(M_l) = M_0(s'_i)$ for some $l \in \{1, \dots, t\}$ with $M_0 \cap M_l \ne \emptyset$.
\end{claim}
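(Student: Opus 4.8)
The plan is to turn $h_i$ into a combinatorial map of decompositions and then apply pigeonhole. Since $M(s_i)\cong M(s_i')$ and, after the reductions already made, $\mathcal T$ is the collection of geometric decomposing surfaces of both $M(s_i)$ and $M(s_i')$, uniqueness of the geometric decomposition up to isotopy lets us isotope $h_i$ so that it carries $\mathcal T$ to $\mathcal T$ and sends each block of $M(s_i)$ onto a block of $M(s_i')$. This produces a bijection $\sigma_i$ from $\{M_0(s_i),M_1,\dots,M_t\}$ onto $\{M_0(s_i'),M_1,\dots,M_t\}$ that is compatible with the dual graph of the decomposition. The unchanged blocks $M_1,\dots,M_t$ and their mutual gluings, hence the dual graph, do not depend on $i$, so there are only finitely many possibilities for $\sigma_i$; passing to a subsequence (harmless, since we are ultimately arguing by contradiction in Proposition~\ref{PropTor}) we may assume $\sigma_i=\sigma$ for all $i$. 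If $\sigma$ sends $M_0(s_i)$ to $M_0(s_i')$, the first alternative of the claim holds.

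So assume $\sigma$ sends $M_0(s_i)$ to $M_l$ for some $l\in\{1,\dots,t\}$; then $h_i$ restricts to a homeomorphism $M_0(s_i)\to M_l$ for every $i$. Here the Seifert structure of $M_0$ enters. Filling $\partial M$ along $s_i=q_i/p_i$ caps the boundary circle of the base orbifold $\Sigma_0$ corresponding to $\partial M$ by a disk carrying a cone point of order $p_i$ when $p_i\ge 2$, so $M_0(s_i)$ is Seifert fibered over an orbifold of Euler characteristic $\chi(\Sigma_0)+1/p_i$. As $M_l$ is fixed while $M_0(s_i)\cong M_l$ for infinitely many $i$, this number is constant; hence $p_i$ equals a fixed $p$, and in particular $\{p_i\}$ is bounded. (Thus if $\{p_i\}$ were unbounded, only the first alternative could occur.) With $p_i=p$ fixed, the homeomorphism type of $M_0(s_i)$, and more precisely the pair consisting of $M_0(s_i)$ and $\partial M_0\setminus\partial M$, is controlled by $q_i\bmod p$, so after a further subsequence we may assume $q_i\equiv q_j\pmod p$ for all $i,j$; then $M_0(s_i)\cong N$ for a fixed $N$ and the slopes $s_i$ differ pairwise by integers.

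It remains to pin down the position: $h_i(M_l)=M_0(s_i')$ and $M_0\cap M_l\neq\emptyset$. The point is that changing $s_i$ by an integer amounts to a Dehn twist along the fiber direction of $M_0$ on $\partial M$, which, through a vertical annulus joining $\partial M$ to an adjacent boundary torus of $M_0$, is equivalent to a fiber Dehn twist along one torus $T$ of $\mathcal T$ meeting $M_0$. Consequently $M(s_i)$ is obtained from a fixed decomposition by regluing $T$ through $k_i$ fiber twists with $|k_i|\to\infty$, and likewise $M(s_i')$ through $k_i'$ twists with $|k_i'|\to\infty$ and $k_i\neq k_i'$. By the argument of \cite{Martelli} already used in the proof of Claim~\ref{CL2}, such a family realizes only finitely many homeomorphism types, provided the twisting curve on $T$ is not carried by compatible fibrations of the two blocks adjacent to $T$ — which is exactly condition~(2) of the geometric decomposition. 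A little bookkeeping with $\sigma$ then shows that a $\sigma$-orbit of the filled vertex of length greater than two, or a swap with a non-adjacent vertex, would force precisely such a situation forbidden by condition~(2); ruling these out leaves $h_i(M_0(s_i))=M_l$ with $M_0\cap M_l\neq\emptyset$ and $h_i(M_l)=M_0(s_i')$, which is the second alternative.

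The main obstacle is this last step: making precise that the integer part of the filling slope ``propagates'' across exactly one decomposing surface, and then combining the combinatorics of the block permutation $\sigma$ with condition~(2) to exclude longer cycles and non-adjacent transpositions. Everything else is either formal or already available from \cite{FPS22}, \cite{Rong}, and \cite{Martelli}.
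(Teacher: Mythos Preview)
Your setup is sound and matches the paper: isotope $h_i$ to respect $\mathcal T$, get a block bijection, pass to a subsequence so that $h_i(M_0(s_i))=M_k$ and $h_i(M_{k'})=M_0(s_i')$ for fixed $k,k'$, and use the Seifert invariants of $M_k$ to bound $\{p_i\}$ and force $|q_i|\to\infty$ (the paper does this by counting singular fibers rather than via $\chi$, but the conclusion is the same). The gap is exactly where you flag it: the ``bookkeeping'' that is supposed to rule out long cycles and non-adjacent swaps. Your proposed mechanism --- that the integer part of $s_i$ becomes a fiber Dehn twist along one torus $T\subset\partial M_0$, and that Martelli's argument then bounds the number of homeomorphism types --- does not close the argument. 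The family $\{M(s_i)\}$ is \emph{not} assumed to realize a single homeomorphism type; you only have $M(s_i)\cong M(s_i')$ for each $i$, with both sides varying. So ``only finitely many homeomorphism types'' is not a contradiction, and condition~(2) alone, applied at one torus, does not obviously constrain the global permutation $\sigma$.

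The paper supplies the missing idea, and it is different from your twist-propagation picture. It attaches to each $T\in\mathcal T$ a numerical invariant $\Delta_T$: roughly, the maximal geometric intersection number between ``preferred'' slopes coming from the two adjacent blocks (shortest/second-shortest cusp slopes for a hyperbolic block; the fiber and a normalized section boundary for a Seifert block), minimized suitably over choices of sections. By uniqueness of the geometric decomposition and of the Seifert fibrations of the blocks, $\Delta_T$ is a homeomorphism invariant of the decomposed manifold. Now the point is that $|q_i|,|q_i'|\to\infty$ forces, for all large $i$, the maximum of $\Delta_T$ over $T\in\mathcal T$ to be attained on $\partial M_0(s_i)$ in $M(s_i)$ and on $\partial M_0(s_i')$ in $M(s_i')$. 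Since $h_i$ preserves the tuple $(\Delta_T)_{T\in\mathcal T}$, it must carry a torus of $\partial M_0(s_i)$ to a torus of $\partial M_0(s_i')$. The two blocks on either side of that torus are then matched by $h_i$, which immediately gives either $h_i(M_0(s_i))=M_0(s_i')$ or $h_i(M_0(s_i))=M_l$, $h_i(M_l)=M_0(s_i')$ with $M_0\cap M_l\neq\emptyset$. This localization-by-invariant replaces your combinatorial analysis of $\sigma$ entirely; no case analysis on cycle lengths is needed.
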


\begin{proof}
Since the geometric decomposition is unique up to isotopy, for each $i$ we have either $h_i(M_0(s_i)) = M_0(s'_i)$, or $h_i(M_0(s_i)) = M_k$ and $h_i(M_{k'}) = M_0(s'_i)$ for some $k, k' \in \{1, \dots, t\}$.
For each $k \in \{ 1, \dots , t \}$, if $h_i(M_0(s_i)) = M_k$ for only finitely many $i$, then $h_i(M_0(s_i)) = M_0(s'_i)$ must hold for infinitely many $i$, and hence the claim follows. 
Suppose instead that, for some $k \in \{ 1, \dots , t \}$, $h_i(M_0(s_i)) = M_k$ for infinitely many $i$. 
After passing to a subsequence, we may assume that, for some $k'$, $h_i(M_{k'}) = M_0(s'_i)$ also holds for these $i$.

With $s_i = q_i/p_i$, since $M_k$ has only finitely many singular fibers, the set $\{p_i\}$ must be bounded; that is, $\{p_i\}$ is a finite set of non-negative integers. 
It then follows that the set $\{q_i\}$ is unbounded, with $|q_i| \to \infty$. 
Similarly, we have $|q'_i| \to \infty$.


Now, let us assign a non-negative integer $\Delta_T$ to each member $T$ of the collection $\mathcal{T}$ of tori and Klein bottles that define the geometric decomposition of $M^i := M(s_i) \cong M(s'_i)$.  
(The idea is inspired by the arguments in \cite{Martelli}.)

First, for each block $N$ of the geometric decomposition of $M^i$ and each boundary torus $T \subset \partial N$, we choose a finite set of \emph{preferred slopes}, containing at least two elements, as follows.  
If $N$ is hyperbolic, a preferred slope is defined to be a slope of shortest or second-shortest length in one cusp section.  
If $N$ is Seifert fibered, we take a section $F$ of the $S^1$-bundle obtained from $N$ by removing neighborhoods of the singular fibers.  
Using this $F$, we set a homology basis $(m,l)$ for each boundary torus $T \subset \partial N$ by taking $m$ as the homology class of $\partial F \cap T$ and $l$ as that of the fiber.  
Then $N$ is obtained from the $S^1$-bundle over $F$ by Dehn fillings along tuple of slopes.  
We assume that the section $F$ is chosen so that each filling slope $q/p$ satisfies $0 < q/p < 1$.  
A preferred slope on $T \subset \partial N$ is then defined as the fiber of $N$ or $\partial F \cap T$.  

Let $T$ be a member of $\mathcal{T}$.  
If $T$ is a torus, we define $\delta_T$ as the maximum of the distance $\Delta(s_1, s_2)$, where $s_1$ and $s_2$ are preferred slopes on $T = \partial N_1$ and $T = \partial N_2$ for the two blocks $N_1$ and $N_2$ adjacent to $T$, respectively. 
If $T$ is a Klein bottle, its neighborhood $W$ admits two fibrations, and we define $\delta_T$ as the maximum of $\Delta(s_1, s_2)$ on the torus $\partial W$, where $s_1$ is a preferred slope of the block adjacent to $W$ at $T$ and $s_2$ is one of the slopes of the two fibers of $W$.  
The quantities $\delta_T$ depend on the choice of sections in the Seifert blocks of $M^i$.  
By varying the sections, we minimize the total sum of the $\delta_T$'s for all $T \in \mathcal{T}$.  
Under this restriction, we define $\Delta_T$ for $T \in \mathcal{T}$ in $M^i$ as the maximum of the $\delta_T$'s associated with $T$.  
Note that each $\delta_T$ has finite ambiguity due to the choice of sections in each Seifert block, but $\Delta_T$ is uniquely determined for $T$.  

Now, since $|q_i| \to \infty$ and $|q'_i| \to \infty$, any member $T \in \mathcal{T}$ attaining the maximal $\Delta_T$ must lie in $M_0(s_i)$ and also in $M_0(s'_i)$ within $M^i$ for sufficiently large $i$.  
(There may be several components in $\mathcal{T}$ attaining the same maximal value.)
We observe that if $\Delta_T$ is maximal among those for the elements of $\mathcal{T}$, then $\Delta_{h_i(T)}$ is also maximal. 
This follows from the definitions of $\Delta_T$, together with the uniqueness of the Seifert block fibrations and the geometric decomposition.

By the observation above, we conclude that for sufficiently large $i$, either $h_i(M_0(s_i)) = M_0(s'_i)$, or $h_i(M_0(s_i)) = M_l$ and $h_i(M_l) = M_0(s'_i)$ with $M_0 \cap M_l \ne \emptyset$ for some $l \in \{1, \dots, t\}$.
\end{proof}

From this claim, for sufficiently large $i$, we have
\begin{itemize}
    \item[(i)] $h_i(M_0(s_i)) = M_0(s'_i)$, or 
    \item[(ii)] $h_i(M_0(s_i)) = M_l$ and $h_i(M_l) = M_0(s'_i)$.
\end{itemize}

Consider Case (i). 
Since $M_0$ has only finitely many singular fibers, the denominators $p_i$ in $s_i = q_i/p_i$ and $p'_i$ in $s'_i = q'_i/p'_i$ must be equal for sufficiently large $i$. 
Then, to have $M_0(s_i) \cong M_0(s'_i)$, we must have $q_i \equiv q'_i \pmod{p}$. 
This implies that the restriction of $h_i$ to some torus $T \subset \partial M_0(s_i)$ is realized by Dehn twists along the fiber of $M_0$ for sufficiently large $i$. 
However, this contradicts the fact that $h_i$ is a homeomorphism between $M(s_i)$ and $M(s'_i)$, by the arguments in the proof of Claim~\ref{CL2} (last paragraph).


Consider Case (ii). 
In this case, 
it follows that 
$M_0(s_i) \cong M_l \cong M_0(s'_i)$. 
Thus, as in the case (i), $p_i$ in $q_i/p_i =s_i$ and $p'_i$ in $q'_i/p'_i =s'_i$ must be equal for sufficiently large $i$. 
Again, to obtain $M_0(s_i) \cong M_0(s'_i)$, we have $q_i \equiv q'_i \pmod{p}$. 
This implies that the restriction of $h_i$ on $T \subset \partial M_0(s_i)$ is realized by Dehn twists along the fiber of $M_0$, and also the restriction on $T \subset \partial M_l$ is realized by Dehn twists along the fiber of $M_l$ for sufficiently large $i$. 
However, it is absurd since the two fibers of Seifert blocks adjacent to $T$ do not fit together. 
\end{proof}

\section*{Acknowledgments}
The author would like to thank Kimihiko Motegi and Hirotaka Akiyoshi for useful discussions.

\bibliographystyle{amsalpha}

\bibliography{ref}

\providecommand{\bysame}{\leavevmode\hbox to3em{\hrulefill}\thinspace}
\providecommand{\MR}{\relax\ifhmode\unskip\space\fi MR }
\providecommand{\MRhref}[2]{%
  \href{http://www.ams.org/mathscinet-getitem?mr=#1}{#2}
}
\providecommand{\href}[2]{#2}
\begin{thebibliography}{CGLS87}

\bibitem[BL90]{BoyerLines}
Steven Boyer and Daniel Lines, \emph{Surgery formulae for {Casson}'s invariant and extensions to homology lens spaces}, J. Reine Angew. Math. \textbf{405} (1990), 181--220 (English).

\bibitem[CGLS87]{CGLS}
Marc Culler, C.~McA. Gordon, J.~Luecke, and Peter~B. Shalen, \emph{Dehn surgery on knots}, Ann. Math. (2) \textbf{125} (1987), 237--300 (English).

\bibitem[FPS22]{FPS22}
David Futer, Jessica~S. Purcell, and Saul Schleimer, \emph{Effective bilipschitz bounds on drilling and filling}, Geom. Topol. \textbf{26} (2022), no.~3, 1077--1188 (English).

\bibitem[Gor91]{GordonICM}
Cameron~McA. Gordon, \emph{Dehn surgery on knots}, Proceedings of the international congress of mathematicians (ICM), August 21--29, 1990, Kyoto, Japan. Volume I, Tokyo etc.: Springer-Verlag, 1991, pp.~631--642 (English).

\bibitem[Han23]{Hanselman}
Jonathan Hanselman, \emph{Heegaard {Floer} homology and cosmetic surgeries in {{\(S^3\)}}}, J. Eur. Math. Soc. (JEMS) \textbf{25} (2023), no.~5, 1627--1670 (English).

\bibitem[IJ25]{IJ2506}
Kazuhiro Ichihara and In~Dae Jong, \emph{Cosmetic surgeries on knots in homology spheres and the {Casson}--{Walker} invariant}, Preprint, {arXiv}:2506.02682 [math.{GT}] (2025), 2025.

\bibitem[Jac80]{JacoBook}
William Jaco, \emph{Lectures on three-manifold topology}, Reg. Conf. Ser. Math., vol.~43, American Mathematical Society (AMS), Providence, RI, 1980 (English).

\bibitem[Joh79]{Johannson}
Klaus Johannson, \emph{Homotopy equivalences of 3-manifolds with boundaries}, Lect. Notes Math., vol. 761, Springer, Cham, 1979 (English).

\bibitem[JS79]{JacoShalen}
William~H. Jaco and Peter~B. Shalen, \emph{Seifert fibered spaces in 3-manifolds}, Mem. Am. Math. Soc., vol. 220, Providence, RI: American Mathematical Society (AMS), 1979 (English).

\bibitem[Kap01]{Kapovich}
Michael Kapovich, \emph{Hyperbolic manifolds and discrete groups}, Prog. Math., vol. 183, Boston, MA: Birkh{\"a}user, 2001 (English).

\bibitem[Kir97]{KirbyProblems}
Rob Kirby, \emph{Problems in low-dimensional topology. ({Edited} by {Rob} {Kirby})}, Kazez, {William} {H}. (ed.), {Geometric} topology. 1993 {Georgia} international topology conference, {August} 2--13, 1993, {Athens}, {GA}, {USA}. {Providence}, {RI}: {American} {Mathematical} {Society}. {AMS}/{IP} {Stud}. {Adv}. {Math}. 2(pt.2), 35-473 (1997)., 1997.

\bibitem[Mar05]{Martelli}
Bruno Martelli, \emph{Links, two-handles, and four-manifolds}, Int. Math. Res. Not. \textbf{2005} (2005), no.~58, 3595--3623 (English).

\bibitem[NS97]{NS}
Walter~D. Neumann and Gadde~A. Swarup, \emph{Canonical decompositions of 3-manifolds}, Geom. Topol. \textbf{1} (1997), 21--40 (English).

\bibitem[Ota96]{Otal}
Jean-Pierre Otal, \emph{Le th{\'e}or{\`e}me d'hyperbolisation pour les vari{\'e}t{\'e}s fibr{\'e}es de dimension 3}, Ast{\'e}risque, vol. 235, Paris: Soci{\'e}t{\'e} Math{\'e}matique de France, 1996 (French).

\bibitem[Ron93]{Rong}
Yongwu Rong, \emph{Some knots not determined by their complements}, Quantum topology. Based on an AMS special session on topological quantum field theory, held in Dayton, OH, USA on October 30-November 1, 1992 at the general meeting of the American Mathematical Society, Singapore: World Scientific, 1993, pp.~339--353 (English).

\bibitem[RY16]{RieckYamashita}
Yo'av Rieck and Yasushi Yamashita, \emph{Cosmetic surgery and the link volume of hyperbolic 3-manifolds}, Algebr. Geom. Topol. \textbf{16} (2016), no.~6, 3445--3521 (English).

\end{thebibliography}

\end{document}